\newcommand{\C}{\mathbb{C}}
\newcommand{\R}{\mathbb{R}}
\newcommand{\Q}{\mathbb{Q}}
\newcommand{\Z}{\mathbb{Z}}
\newcommand{\A}{\mathbb{A}}
\newcommand{\F}{\mathbb{F}}
\DeclareMathOperator{\HT}{HT}
\DeclareMathOperator{\Inert}{Inert}
\DeclareMathOperator{\Hom}{Hom}
\DeclareMathOperator{\Sym}{Sym}
\DeclareMathOperator{\Frob}{Frob}
\DeclareMathOperator{\Tr}{Tr}
\DeclareMathOperator{\Gal}{Gal}
\DeclareMathOperator{\GL}{GL}
\DeclareMathOperator{\GSp}{GSp}
\DeclareMathOperator{\PGL}{PGL}
\DeclareMathOperator{\SL}{SL}
\DeclareMathOperator{\SU}{SU}
\DeclareMathOperator{\PSL}{PSL}
\DeclareMathOperator{\SO}{SO}
\DeclareMathOperator{\GO}{GO}
\DeclareMathOperator{\Spin}{Spin}
\titleformat{\section}[hang]
{\normalfont\filright\large}{\thesection. }{0pt}
{\upshape\bfseries}
\titleformat{\subsection}[hang]
{\itshape}{\thesubsection \ - }{0pt}
{}
\theoremstyle{plain}
\newtheorem{theo}{Theorem}[section]
\newtheorem{prop}[theo]{Proposition}
\theoremstyle{remark}
\newtheorem{rema}[theo]{\sc Remark}
\theoremstyle{definition}
\title{On the images of certain $G_2$-valued automorphic Galois representations}
\author{\small ADRI\'AN ZENTENO \footnote{The author was partially supported by CONICYT Proyecto FONDECYT Postdoctorado No. 3190474}}
\date{\today}
\begin{document}
\maketitle

\begin{abstract}
In this paper we study the images of certain families $\{\rho_{\pi,\ell} \}_\ell$ of $G_2$-valued Galois representations of $\Gal(\overline{F}/F)$ associated to $L$-algebraic regular, self-dual, cuspidal automorphic representations $\pi$ of $\GL_7(\A_F)$, where $F$ is a totally real field. In particular, we prove that, under certain automorphic conditions, the images of the residual representations $\overline{\rho}_{\pi,\ell}$ are as large as possible for infinitely many primes $\ell$. Moreover, we apply our result to some examples constructed by Chenevier, Renard and Ta\"ibi.

2020 \emph{Mathematics Subject Classification}: Primary 11F80; Secondary 20G41.
\end{abstract}


\section{Introduction}

Let $F$ be a totally real field, $G_F:=\Gal(\overline{F}/F)$ be the absolute Galois group of $F$ and $\pi$ be an $L$-algebraic regular, self-dual, cuspidal automorphic representation of $\GL_n(\A_F)$.  Thanks to the work of Chenevier, Clozel, Harris, Kottwitz, Shin, Taylor and several others, we know that there exists a family $\{ \rho_{\pi,\ell} \}_\ell$ of continuous semi-simple Galois representations
\[
\rho_{\pi,\ell} : G_F \longrightarrow \GL_n(\overline{\Q}_{\pi,\ell})
\]
associated to $\pi$, such that Satake parameters an eigenvalues of Frobenius elements match. In particular, by the self-duality, the image of each $\rho_{\pi,\ell}$ is contained in $\GO_n(\overline{\Q}_\ell)$ or $\GSp_n(\overline{\Q}_\ell)$. 

A folklore conjecture, ensures that the images of the residual representations $\overline{\rho}_{\pi,\ell}$ should be as large as possible for almost all primes $\ell$ ($i.e.$ all but finitely many), unless there is an automorphic reason for it does not happen. In the 2-dimensional case, the conjecture was proven by Momose \cite{Mo81} and Ribet \cite{Ri85} when $\pi$ comes from a classical modular form and by Dimitrov \cite{Dim05} when $\pi$ comes from a Hilbert modular form. In this case, modular forms with complex multiplication (the automorphic reason) had to be excluded in order to obtain large image. When $\pi$ comes from a Hilbert-Siegel modular form of genus 2, the conjecture has been proved recently by Weiss \cite{Wei} \cite{Wei2}. In this case, CAP, endoscopic lifts, automorphic inductions and symmetric cube lifts need to be excluded to obtain large image.

In a recent work \cite{Ch19}, Chenevier has studied certain $L$-algebraic regular, self-dual, cuspidal automorphic representations $\pi = \otimes_v' \pi_v$ of $\GL_7(\A_F)$ of  weight $\{ -(h_\tau + k_\tau), -k_\tau, -h_\tau, 0, h_\tau,  k_\tau, h_\tau + k_\tau \}_{\tau \in \Hom_{\Q}(F, \C)}$, such that the 7-dimensional families of Galois representations  $\{\rho_{\pi,\ell} \}_\ell$ associated to them, are $G_2$-valued (Theorem \ref{Chen}). In this paper, we prove a weak version of the large image conjecture for these automorphic representations. 
More precisely, we prove that if the weight of $\pi$ is such that $k_\tau \neq 2h_\tau$ for some $\tau \in \Hom_{\Q}(F, \C)$, then there exists a positive Dirichlet density set of primes $\mathcal{L}$ such that for all $\ell \in \mathcal{L}$ the image of $\overline{\rho}_{\pi,\lambda}$ is isomorphic to $G_2(\F_{\ell^s})$ for some positive integer $s$ (Theorem \ref{Prin}). In fact, if we assume that for some finite place $v$, $\pi_v$ is square integrable, then the set of primes $\mathcal{L}$ has Dirichlet density 1 (Theorem \ref{vfue}).

We remark that the condition imposed on the weight of $\pi$ is in order to exclude sixth symmetric power lifts. However, as we will see in Section \ref{sec3}, there are cuspidal automorphic representations of weight $\{ -3h_\tau, -2h_\tau, -h_\tau, 0, h_\tau,  2h_\tau, 3h_\tau \}_{\tau \in \Hom_{\Q}(F, \C)}$ such that they are not sixth symmetric power lifts.
When $F=\Q$, by using Serre's modularity conjecture, we prove that if $\pi$ is not a sixth symmetric power lift, then there exists a positive Dirichlet density set of primes $\mathcal{L}$ (in fact of Dirichlet density one, if $\pi_p$ is square integrable for some prime $p$) such that for all $\ell \in \mathcal{L}$ the image of $\overline{\rho}_{\pi,\ell}$ is isomorphic to $G_2(\F_{\ell^s})$ for some positive integer $s$ (Theorem \ref{Prin2}). Finally, we show that the 11 examples of cuspidal automorphic representations of $\GL_7(\A_\Q)$ of level one given in \cite[$\S$ 6.11]{Ch19}, satisfy some of our results (Proposition \ref{ejemp} and Remark \ref{ejem2}).

The proof of our results follows the line of \cite{Di02} and \cite{DZ20}, in the sense that our main tools are: some recent results about residual irreducibility of compatible systems of Galois representations \cite{BLGGT14} \cite{PT15}, the classification of the maximal subgroups of $G_2(\F_{\ell^r})$ \cite{Kle88}, and Fontaine-Laffaille theory \cite{FL82} \cite{Bar20}.

To the best of our knowledge, the only $G_2$-valued automorphic Galois representations that have been studied in this direction have been those associated to some examples of cuspidal automorphic representation of $\GL_7(\A_\Q)$ with certain prescribed local ramification. See \cite{KLS10} and \cite{MS14}.



\section{Preliminaries on 7-dimensional Galois representations}\label{sec1}

In this section we review some definitions and results about Galois representations associated to cuspidal automorphic representations of $\GL_7$ over totally real fields.  Our main references are \cite{BG14} and \cite{Bar20}.

Let $F$ be a totally real field and $\pi = \otimes_v ' \pi_v$ be a cuspidal automorphic representation of $\GL_7(\A_F)$. 
Let $v$ be an Archimedean place of $F$ and $\tau :F \hookrightarrow \C$ be the embedding inducing $v$. 
Langlands classification associates to $\pi_\tau = \pi_v$ a semi-simple representation $\phi_\tau: W_\R \rightarrow \GL_7(\C)$ of the Weil group $W_\R$.
We will say that $\pi_\tau$ is \emph{L-algebraic}, if the restriction of $\phi_\tau$ to the Weil group $W_\C = \C^\times$ is of the form 
\[
\phi_\tau \vert_{\C^\times} = \chi_{\tau,1} \oplus \cdots \oplus \chi_{\tau,7}
\]
where  $\chi_{\tau,i}: \C^\times \rightarrow \C^\times$ are characters such that 
\[
\chi_{\tau, i}(z) = z^{a_{\tau,i}} \overline{z}^{b_{\tau,i}}
\] 
with $a_{\tau,i}, b_{\tau,i} \in \Z$. Let $\Z^7_+$ be the set of $7$-tuples of integers $\{ \alpha_1, \ldots, \alpha_7 \} \in \Z^7$ such that $\alpha_1 \leq \ldots \leq  \alpha_7$. After reordering indices, we will refer to the 7-tuple 
$\{ a_{\tau,1}, \cdots a_{\tau,7} \} \in \Z^7_+$ 
as the weight of $\pi_\tau$. Moreover, we will say that $\pi_\tau$ is \emph{regular} if the $a_{\tau,i}$ are distinct. Finally, we will say that $\pi$ is \emph{L-algebraic regular} of weight 
$\{a_{\tau,1} , \ldots , a_{\tau,7} \}_\tau \in (\Z^7_+)^{\Hom_\Q (F,\C)}$,
if for each $\tau \in \Hom_\Q (F, \C)$, $\pi_\tau$ is $L$-algebraic and regular of weight $\{ a_{\tau,1}, \ldots , a_{\tau,7} \} \in \Z^7_+$.

Let $\pi = \otimes' _v \pi_v$ be an $L$-algebraic regular cuspidal automorphic representation of $\GL_7(\A_F)$ and $S_\pi$ be the finite set of finite places $v$ of $F$ at which $\pi_v$ is ramified. From now on, for each prime $\ell$, we fix an isomorphism $\iota: \C \xrightarrow{\sim} \overline{\Q}_\ell$.
If we assume that $\pi$ is \emph{self-dual} ($i.e.$ $\pi^\vee \simeq \pi$) it can be proved that, for each prime $\ell$, there exists a continuous semi-simple representation
\[
\rho_{\pi,\ell} : G_F \longrightarrow \GL_7(\overline{\Q}_{\ell})
\]
such that if $v \notin S_\pi$ and $v \nmid \ell$ then $\rho_{\pi,\ell}$ is unramified at $v$ and the characteristic polynomial of a Frobenius element $\Frob_v$ satisfies
\[
\det(X - \rho _{\pi,\ell} (\Frob_v)) = \iota \det (X-c(\pi_v)),
\]
where $c(\pi_v)$ is the Satake parameter of $\pi_v$ viewed as a semi-simple conjugacy class in $\GL_7(\C)$, while if $v \vert \ell$ then $\rho_{\pi,\ell} \vert_{G_{F_v}}$ is de Rham and in fact crystalline when $v \notin S_\pi$ \cite[Theorem 3.1.2]{Ta16}. 

Now, we will explain the relationship between the Hodge-Tate numbers of $\rho_{\pi,\ell} \vert_{G_{F_v}}$  and the inertial weights of its reduction modulo $\ell$. As $ \rho_{\pi,\ell} \vert_{G_{F_v}}$ is de Rham (then, by definition, Hodge-Tate) at $v \vert \ell$, for each embedding $\tau: F_v \hookrightarrow \overline{\Q}_\ell$, we can attach to $ \rho_{\pi,\ell} \vert_{G_{F_v}}$ a multiset of integers 
$\HT_{\tau} ( \rho_{\pi,\ell} \vert_{G_{F_v}}) = \{\alpha_{\tau,1} , \ldots, \alpha_{\tau,7} \} \in \Z^7_+$
called the $\tau$-Hodge-Tate numbers of $ \rho_{\pi,\ell} \vert_{G_{F_v}}$. These numbers can be obtained from the weight of $\pi$ as follows. Let $\{a_{\tau,1} , \ldots , a_{\tau,7} \}_\tau \in (\Z^7_+)^{\Hom_\Q (F,\C)}$ be the weight of $\pi$. Identifying $\{ (v,\tau): v \vert \ell, \; \tau \in \Hom_{\Q_\ell} (F_v, \overline{\Q}_\ell) \}$ with $\Hom_{\Q}(F, \C)$ via the fixed isomorphism $\iota$, we have that
\[
\HT_{\tau} ( \rho_{\pi,\ell} \vert_{G_{F_v}}) = \{a_{\tau,1} , \ldots, a_{\tau,7} \} \in \Z^7_+,
\]
where $\tau$ in the right side is the embedding associate to the pair $(v,\tau)$.

On the other hand, let $\overline{\rho}_{\pi,\ell} : G_{F} \rightarrow \GL_7(\overline{\F}_\ell)$ be the mod $\ell$ reduction of $\rho_{\pi,\ell}$ and $\F_v$ be the residue field of $F_v$.  If we assume that $F$ is unramified at $\ell$, we can attach to $\overline{\rho}_{\pi,\ell} \vert_{G_{F_v}}$ a subset 
\[
\Inert (\overline{\rho}_{\pi,\ell} \vert_{G_{F_v}}) \subset (\Z^7_+)^{\Hom_{\F_\ell}(\F_v,\overline{\F}_\ell)}
\]
called the set of inertial weights of $\overline{\rho}_{\pi,\ell} \vert_{G_{F_v}}$. These weights, which are an analogue of Hodge-Tate numbers for mod $\ell$ representations, only depend on the restriction to the inertia subgroup $I_{F_v} \subset G_{F_v}$  of the semi-simplification of $\overline{\rho}_{\pi,\ell} \vert_{G_{F_v}}$. Let
\[
\HT ( \rho_{\pi,\ell} \vert_{G_{F_v}}) = \{a_{\tau,1} , \ldots, a_{\tau,7} \}_\tau \in (\Z^7_+)^{\Hom_{\Q_\ell}(F_v,\overline{\Q}_\ell)}
\]
be the Hodge-Tate numbers of $\rho_{\pi,\ell} \vert_{G_{F_v}}$. Note that, as we are assuming that $F$ is unramified at $\ell$, we can index the Hodge-Tate numbers of $\rho_{\pi,\ell} \vert_{G_{F_v}}$ by embeddings $\F_v \hookrightarrow \overline{\Q}_\ell$ rather than embeddings $F_v \hookrightarrow \overline{\Q}_\ell$. Thus, we can see $\HT (\rho_{\pi,\ell} \vert_{G_{F_v}})$ as an element of $(\Z^7_+)^{\Hom_{\F_\ell}(\F_v,\overline{\F}_\ell)}$. By Fontaine-Lafaille theory \cite[Theorem 1.0.1]{Bar20} we have that, if $\rho_{\pi,\ell} \vert_{G_{F_v}}$ is crystalline and $a_{\tau,n} - a_{\tau,1} \leq p$ for all $\tau \in \Hom_{\Q_\ell} (F_v, \overline{\Q}_\ell)$, then 
\[
\HT ( \rho_{\pi,\ell} \vert_{G_{F_v}}) \in \Inert (\overline{\rho}_{\pi,\ell} \vert_{G_{F_v}}).
\]


\section{$G_2$-valued Galois representations with large image}\label{sec2}

Let $G_2$ be the automorphism group scheme of the standard split octonion algebra over $\Z$.  It is well known that, for any algebraically closed field $k$ of characteristic 0, there is a unique (up to isomorphism) irreducible $k$-linear algebraic representation $\sigma : G_2(k) \rightarrow \GL_7(k)$.
Using the result of the previous Section on the existence of Galois representations associated to self-dual cuspidal automorphic representations of $\GL_7(\A_F)$, Chenevier \cite[Corollary 6.5, Corollary 6.10]{Ch19} proved the following result:

\begin{theo}\label{Chen}
Let $\pi = \otimes'_v \pi_v$ be an $L$-algebraic regular, self-dual, cuspidal automorphic representation of $\GL_7(\A_F)$ and
assume that, for almost all finite places $v \notin S_\pi$, the Satake parameter $c(\pi_v)$ of $\pi_v$ is the conjugacy class of an element in $\sigma(G_2(\C))$.
Then, for each prime $\ell$, there exists a continuous semi-simple representation
\[
\rho_{\pi,\ell} : G_{F} \longrightarrow G_2(\overline{\Q}_{\ell})
\]
such that
\begin{itemize}
\item if $v \notin S_\pi$ and $v \nmid \ell$ then $\rho_{\pi,\ell}$ is unramified and
\[
\det(X-\sigma(\rho_{\pi,\ell} (\Frob_v))) = \iota \det (X-c(\pi_v)),
\]
\item while if $v \vert \ell$ then $\rho_{\pi,\ell} \vert_{G_{F_v}}$ is de Rham and in fact crystalline when $v \notin S_\pi$. 
\end{itemize}
Moreover, the weight of $\pi$ is of the form 
\[
\{ -(h_\tau + k_\tau), -k_\tau, -h_\tau, 0, h_\tau,  k_\tau, h_\tau + k_\tau \}_\tau \in (\Z^7_+)^{ \Hom_{\Q}(F, \C)}.
\]
\end{theo}

Let $\overline{\rho}_{\pi,\ell} : G_{F} \rightarrow G_2(\overline{\F}_\ell)$ be the semi-simplification of the mod $\ell$ reduction of $\rho_{\pi,\ell}$. This representation is usually called the \emph{residual representation} of $\rho_{\pi,\ell}$. The main goal of this paper is to prove the following result.

\begin{theo}\label{Prin}
Let $\pi$ be an $L$-algebraic regular, self-dual, cuspidal automorphic representation of $\GL_7(\A_F)$ as in Theorem \ref{Chen} and assume that the weight of $\pi$ is such that $k_\tau \neq 2h_\tau$ for some $\tau \in \Hom_{\Q}(F, \C)$. Then, there exists a positive Dirichlet density set of primes $\mathcal{L}$ such that for all $\ell \in \mathcal{L}$ the image of $\overline{\rho}_{\pi,\ell}$ is isomorphic to $G_2(\F_{\ell^s})$ for some positive integer $s$.
\end{theo}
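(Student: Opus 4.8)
The plan is to follow the strategy of \cite{Di02} and \cite{DZ20}: first establish residual irreducibility for a density-one set of primes, then use the classification of maximal subgroups of $G_2(\F_{\ell^r})$ to pin down the image, and finally use Fontaine–Laffaille theory to rule out the small, ``bounded-order'' maximal subgroups.

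\medskip

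\noindent\textbf{Step 1: Residual irreducibility.} First I would observe that $\{\varrho_{\pi,\lambda}\}_\lambda$ is a compatible system of $7$-dimensional Galois representations in the sense of \cite{BLGGT14}; since $\pi$ is cuspidal, each $\varrho_{\pi,\lambda}$ is irreducible. By the potential automorphy / residual irreducibility results of \cite{BLGGT14}, there is a set of rational primes $\mathcal L_0$ of Dirichlet density $1$ such that for all $\lambda \mid \ell$ with $\ell \in \mathcal L_0$ the residual representation $\overline\varrho_{\pi,\lambda} = \sigma \circ \overline\rho_{\pi,\lambda}$ is irreducible as a $7$-dimensional representation over $\overline\F_\ell$. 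In particular $\overline\rho_{\pi,\lambda}(G_\Q)$ is not contained in any parabolic subgroup of $G_2$, and (enlarging $\ell$ to avoid $2,3,7$ and small anomalies) its image is not contained in the normalizer of a maximal torus, since any such group acts reducibly on the $7$-dimensional representation.

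\medskip

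\noindent\textbf{Step 2: Maximal subgroups.} Next I would invoke Kleidman's classification \cite{Kle88} of the maximal subgroups of $G_2(\F_{\ell^r})$ (for $\ell \ge 5$, say). These fall into: (a) parabolics; (b) subgroups of the form related to $\SL_3$, $\SU_3$, $\SL_2 \times \SL_2$ and their normalizers (the ``subsystem'' and torus-normalizer subgroups), which act reducibly on the $7$-dimensional module; (c) subfield subgroups $G_2(\F_{\ell^{r'}})$ with $r' \mid r$; (d) the exceptional embeddings, principally $\PGL_2(\F_\ell)$ coming from the principal $\SL_2$ (this is the ``symmetric sixth power'' case) and the subgroups $G_2(2) \cong \PGU_3(3){:}2$, $\PGL_2(7)$, $\PSL_2(8)$, $\PSL_2(13)$, $\mathrm{Janko}\ J_1$, $2^3{.}\SL_3(2)$, etc.\ of bounded order; and (e) $\PSL_2(\F_\ell)$ or $\PGL_2(\F_\ell)$ (principal $\SL_2$) in the large-$\ell$ range. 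Having excluded (a) and (b) by Step 1 and irreducibility of the $7$-dimensional representation, and absorbing (c) into the conclusion (a subfield subgroup $G_2(\F_{\ell^s})$ is exactly the allowed outcome), it remains to exclude the two families of ``genuinely small'' possibilities: the principal $\PGL_2$-type subgroups and the finite list of bounded-order groups.

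\medskip

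\noindent\textbf{Step 3: Ruling out the small groups via Fontaine--Laffaille theory.} For $\ell \notin S_\pi$ with $\ell$ large (in particular $\ell - 1 > u+v$, so the Hodge--Tate weights $0, \pm u, \pm v, \pm(u+v)$ lie in a Fontaine--Laffaille window), the representation $\overline\rho_{\pi,\lambda}$ is crystalline with these Hodge--Tate weights, and Fontaine--Laffaille theory \cite{FL82} describes the restriction $\overline\rho_{\pi,\lambda}|_{I_\ell}$ to inertia: its semisimplification, viewed $7$-dimensionally, is $\bigoplus_i \psi^{-h_i}$ over the characters $\psi$ of $\F_{\ell^2}^\times$ (fundamental characters of level dividing $2$) with the $h_i$ the Hodge--Tate weights. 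The key point is that this forces the image to contain an element whose order is divisible by a large prime (roughly, an element of order $\sim \ell$ in the torus direction associated to the weight $u+v$), using $v \ne 2u$ precisely to guarantee that the seven Hodge--Tate weights are \emph{not} of the form $\{0, \pm a, \pm 2a, \pm 3a\}$, i.e.\ that $\overline\rho_{\pi,\lambda}$ is not $\mathrm{Sym}^6$ of a $2$-dimensional representation on inertia. An element of order divisible by such a large prime $\ell$ cannot lie in any of the bounded-order maximal subgroups $G_2(2)$, $\PGL_2(7)$, $\PSL_2(8)$, $\PSL_2(13)$, $J_1$, etc., once $\ell$ exceeds an explicit bound; and the condition $v \ne 2u$ together with the inertial shape rules out the principal $\PGL_2$ (whose unipotent-plus-torus structure on the $7$-dimensional module is exactly the $\mathrm{Sym}^6$ pattern $\{0,\pm a,\pm 2a,\pm 3a\}$). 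Intersecting the density-one set $\mathcal L_0$ with the cofinitely many primes $\ell$ satisfying the Fontaine--Laffaille bound and the order bounds gives a set $\mathcal L$ of positive Dirichlet density (in fact density $1$ after removing finitely many primes) on which the only surviving maximal-subgroup option is a subfield subgroup; hence $\overline\rho_{\pi,\lambda}(G_\Q) \cong G_2(\F_{\ell^s})$ for some $s \ge 1$, as claimed.

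\medskip

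\noindent I expect the main obstacle to be Step 3: extracting from Fontaine--Laffaille theory a clean statement that the image contains an element of order divisible by a large prime, and making the role of the hypothesis $v \ne 2u$ precise enough to exclude the principal $\SL_2$/$\mathrm{Sym}^6$ subgroup for \emph{all} large $\ell$ rather than merely infinitely many — this is where the arithmetic of the weights $0, \pm u, \pm v, \pm(u+v)$ modulo $\ell$ enters, and care is needed because one only controls the tame inertial weights modulo $\ell - 1$ (or $\ell^2 - 1$), so congruence coincidences must be shown to affect only a density-zero (indeed finite) set of primes.
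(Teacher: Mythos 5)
Your Steps 2 and 3 follow the same route as the paper (Kleidman's classification, irreducibility to kill the parabolic and reducible subgroups, Fontaine--Laffaille inertial weights with $v\neq 2u$ to kill the $\Sym^6$-embedded $\PGL_2$, and a bounded-order argument for the exceptional groups, where the paper simply cites \cite[Lemma 5.3]{CG13}). The genuine gap is in Step 1: you assert that ``since $\pi$ is cuspidal, each $\varrho_{\pi,\lambda}$ is irreducible,'' and then deduce from \cite{BLGGT14} residual irreducibility on a density-one set of primes. Irreducibility of the $\lambda$-adic representations attached to a cuspidal $\pi$ is not a consequence of cuspidality; for $n=7$ it is an open conjecture (it is known for $n\le 5$ by Calegari--Gee, and in general only under extra local hypotheses). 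Moreover, Proposition 5.3.2 of \cite{BLGGT14} needs irreducibility of the members of the compatible system as an input, so your density-one conclusion inherits the unjustified premise.

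What is actually available, and what the paper uses, is Theorem 1.7 of \cite{PT15}, which gives irreducibility of $\rho_{\pi,\lambda}$ only for a \emph{positive-density} set of primes; one then runs the argument of \cite[Proposition 5.3.2]{BLGGT14} on that set (discarding finitely many primes) to get residual irreducibility on a positive-density set. This is exactly why Theorem \ref{Prin} claims only positive density, and why density $1$ is obtained only in Theorem \ref{vfue}, where the square-integrability of some $\pi_p$ lets one invoke Corollary B of \cite{TY07} to get irreducibility at \emph{all} $\lambda$ before applying \cite[Proposition 5.3.2]{BLGGT14}. Your proof can be repaired by replacing the cuspidality claim with the citation of \cite{PT15} and weakening ``density one'' to ``positive density'' throughout Step 1; the rest of your argument then goes through and matches the paper's.
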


The proof of this theorem follows the structure of \cite{Di02} and \cite{DZ20}. Then, as in loc. cit., the proof of Theorem \ref{Prin} is done by considering the possible images of $\overline{\rho}_{\pi,\ell}$ given by the maximal subgroups of $G_2(\F_{\ell^r})$. Such subgroups were classified by Kleidman in \cite{Kle88}.

\begin{prop}\label{clas}
Let $\F_q$ be a finite field of characteristic $\ell>11$ and $q=\ell^r$. Then, the maximal proper subgroups of $G_2(\F_q)$ are as follows:
\begin{enumerate}
\item maximal parabolic subgroups;
\item $\SL_3(\F_q){:}2$ and $\SU_3(\F_q) {:} 2$;
\item $(\SL_2(\F_q) \circ \SL_2(\F_q)).2$;
\item $\PGL_2(\F_q)$, $\ell \geq 7$, $q\geq 11$;
\item $2^{3.}\PSL_3(\F_2)$, $\PSL_2(\F_{13})$, $\PSL_2(\F_8)$, $G_2(\F_2)$;
\item $G_2(\F_{q_0})$, $q = q_0^s$, $s$ prime.
\end{enumerate}
\end{prop}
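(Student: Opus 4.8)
The plan is to cite Kleidman's classification \cite{Kle88} of the maximal subgroups of the finite groups of type $G_2$ over fields of characteristic $\ell > 3$, and simply to reorganize his list into the seven families above. First I would recall the overall strategy behind such classifications: following the Aschbacher-type philosophy for exceptional groups, the maximal subgroups of $G_2(\F_q)$ split into those of ``geometric'' type (normalizers of parabolic subgroups, of reductive subgroups of maximal rank, and of subsystem subgroups) and those of ``almost simple'' type, where a simple group acts (essentially) irreducibly on the $7$-dimensional module. Items (i), (ii), (iii) and (vii) on our list come from the geometric part: the two conjugacy classes of maximal parabolics correspond to the two nodes of the $G_2$ Dynkin diagram; the subsystem subgroup of type $A_2$ gives rise to the two classes $\SL_3(\F_q){:}2$ and $\SU_3(\F_q){:}2$ (the outer $2$ being the graph automorphism of $A_2$, realized inside $G_2$); the subsystem subgroup of type $A_1 \tilde A_1$ gives $(\SL_2(\F_q)\circ\SL_2(\F_q)).2$; and $G_2(\F_{q_0})$ with $q=q_0^s$, $s$ prime, are the subfield subgroups. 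The remaining items (iv), (v), (vi) are exactly the almost simple maximal subgroups catalogued in \cite{Kle88}: the principal $\PGL_2$ (existing only when $\ell \geq 7$ and $q\geq 11$, as the Borel-de Siebenthal/Serre analysis of $A_1$-subgroups shows), the finitely many exceptional cases $2^3{\cdot}\PSL_3(\F_2)$, $\PSL_2(\F_{13})$, $\PSL_2(\F_8)$, $G_2(\F_2)$ coming from small characteristic coincidences, and Janko's group $J_1$ occurring only for $\ell = 11$.

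The only genuine content I would need to add beyond the citation is to make sure the hypotheses match: Kleidman works with $q$ odd and, in the relevant tables, with $\ell \geq 5$ (equivalently $\ell > 3$, since $\ell = 5$ is already generic), so the characteristic restriction $\ell > 3$ in the statement is precisely what allows us to avoid the special behaviour in characteristics $2$ and $3$ (where $G_2$ has extra isogenies and the module structure degenerates). I would also note that, because the paper will only ever use this proposition to rule out proper subgroups, it is harmless to state the list in the slightly coarse form above (e.g. writing $\SL_3(\F_q){:}2$ for the full normalizer, without spelling out the extension class), and that maximality of each listed subgroup, together with the assertion that the list is exhaustive, is exactly Kleidman's theorem.

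There is no real obstacle here: the proposition is a verbatim (or near-verbatim) transcription of a published classification, so the ``proof'' is the citation together with a remark reconciling the hypotheses. If anything, the only point requiring a word of care is the bookkeeping of which small groups appear and in which characteristic — in particular that $J_1$ is genuinely maximal in $G_2(\F_{11})$ and not contained in one of the generic families — but this too is settled in \cite{Kle88} and I would not reprove it.
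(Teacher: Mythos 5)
Your proposal matches the paper exactly: the paper gives no independent proof of this proposition, but simply states it as Kleidman's classification \cite{Kle88}, with the restriction $\ell>3$ imposed precisely to sidestep the extra maximal subgroups arising in characteristics $2$ and $3$. Your citation-plus-hypothesis-reconciliation argument is the intended (and only needed) justification.
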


The assumption in the characteristic of $\F_q$ is in order to avoid the difficulties associated to the extra maximal subgroups appearing in characteristic 2, 3  and 11 (see for example \cite[Section 4.3]{Wil09}). We remark that this restriction does not matter for our purposes because we only need to know the previous classification for a positive Dirichlet density set of primes.

Another tool that is widely used in \cite{Di02} and \cite{DZ20}, is Fontaine–Laffaille theory. More precisely, we will use the following result which follows from the previous section and Theorem \ref{Chen}.

\begin{prop}\label{Iner}
Let $\pi = \otimes '_v \pi_v$ be an $L$-algebraic regular, self-dual, cuspidal automorphic representation of $\GL_7(\A_\Q)$ as in Theorem \ref{Chen}. Then, for each finite place $v \vert \ell$ and each $\tau \in \Hom_{\Q_\ell} (F_v, \overline{\Q}_\ell)$ we have that
\[
\HT_{\tau} (\rho_{\pi, \ell} \vert _{G_{F_v}}) = \{ -(h_\tau + k_\tau), -k_\tau, -h_\tau, 0, h_\tau,  k_\tau, h_\tau + k_\tau \} \in \Z^7_+
\]
Moreover, if $v \notin S_\pi$ and $2h_\tau + 2k_\tau \leq \ell$ for all $\tau \in \Hom_{\Q_\ell} (F_v, \overline{\Q}_\ell)$, then
\[
\HT(\rho_{\pi, \ell} \vert _{G_{F_v}}) \in \Inert(\overline{\rho}_{\pi, \ell} \vert _{G_{F_v}}).
\]
\end{prop}

Now, we are ready to prove Theorem \ref{Prin}. Our proof will be given by showing that the image of $\overline{\rho}_{\pi,\ell}$ is not contained in any subgroup lying in cases $i)-v)$ of Proposition \ref{clas}.

\paragraph*{Proof of Theorem \ref{Prin}.}
Let $\pi$ be an $L$-algebraic regular, self-dual, cuspidal automorphic representation of $\GL_7(\A_F)$ as in Theorem \ref{Chen}. By Theorem 1.7 of \cite{PT15}, we have that there exists a positive  Dirichlet density set of primes $\mathcal{L}''$ such that for all $\ell \in \mathcal{L}''$ the representation $\rho_{\pi,\ell}$ is irreducible. 
Then, it can be proved, by an identical argument  to the proof of Proposition 5.3.2 of \cite{BLGGT14}, that there is a positive Dirichlet density set of primes $\mathcal{L}' \subset \mathcal{L}''$ (obtained by removing a finite number of primes from $\mathcal{L}''$) such that $\overline{\rho}_{\pi,\ell}$ is irreducible for all $\ell \in \mathcal{L}'$. 
Then, if $\ell \in \mathcal{L}'$, the image of $\overline{\rho}_{\pi,\ell}$ cannot be contained in a maximal subgroup in cases $i)-iii)$ of Proposition \ref{clas} because they are reducible groups.

Now, we will deal with case $iv)$ of Proposition \ref{clas}. In this case, $\PGL_2(\F_q)$ fits into $G_2(\F_q)$ via $\Sym^6: \PGL_2 \rightarrow G_2$. Then, if $G_\ell := \mbox{Im}(\overline{\rho}_{\pi,\ell})$ is contained in $\Sym^6 (\PGL_2(\F_q))$, the elements of $G_\ell$ are of the form
\[
\Sym^6 
\begin{pmatrix}
x & * \\
* & y 
\end{pmatrix} = \begin{pmatrix}
x^6 & * & * & * & * & * & * \\
* & x^5 y & * & * & * & * & * \\
* & * & x^4 y^2 & * & * & * & * \\
* & * & * & x^3 y^3 & * & * & * \\
* & * & * & * & x^2 y^4 & * & * \\
* & * & * & * & * & x y^5 & * \\
* & * & * & * & * & * & y^6 
\end{pmatrix}
\]
where $x,y \in \overline{\F}_\ell$. Then, we can deduce that
\[
(x^{(6-m)} y^m)(x^{(6-m)-2} y^{m+2}) = (x^{(6-m)-1} y^{m+1})^2
\]
for $0 \leq m \leq 4$. From these equalities, we have that for all $\ell$ sufficiently large and any $v \vert \ell$, the inertial weights $\{ \alpha_{\tau,1}, \ldots, \alpha_{\tau,7} \}_\tau \in \Inert(\overline{\rho}_{\pi, \ell} \vert _{G_{F_v}})$ should satisfy the following relation
\begin{equation}\label{pesos}
\alpha_{\tau,i}+\alpha_{\tau, i+2} = \alpha_{\tau, i+1}
\end{equation}
for $1 \leq i \leq 5$. In particular, if $\ell$ is such that $v \notin S_\pi$ and $2h_\tau + 2k_\tau \leq \ell$ for each $\tau \in \Hom_{\Q_\ell} (F_v, \overline{\Q}_\ell)$, we have by Proposition \ref{Iner} that the $\tau$-Hodge-Tate numbers $\HT_\tau (\rho_{\pi, \ell} \vert _{G_{F_v}}) =  \{ -(h_\tau + k_\tau), -k_\tau, -h_\tau, 0, h_\tau,  k_\tau, h_\tau + k_\tau \} = \{ \alpha_{\tau,1} , \ldots, \alpha_{\tau,7} \}$ should satisfy (\ref{pesos}). However, that only happens if $k_\tau = 2h_\tau$ for all $v \vert \ell$ and any $\tau \in \Hom_{\Q_\ell} (F_v, \overline{\Q}_\ell)$.
Then, by our assumption on the weight of $\pi$, we have that the image of $\overline{\rho}_{\pi, \ell}$ cannot be contained in $\Sym^6 (\PGL_2(\F_q))$ when $\ell$ is sufficiently large.

Finally, if the image of $\overline{\rho}_{\pi,\ell}$ is contained in one of the maximal subgroups in case $v)$ of Proposition \ref{clas}, the order of $G_\ell$ is bounded independently of $\ell$. Then, by \cite[Lemma 5.3]{CG13}, if $\ell$ is large enough, the image of $\overline{\rho}_{\pi,\ell}$ cannot be contained in one of these maximal subgroups.

Therefore, there is a positive Dirichlet density set of primes $\mathcal{L}$ (obtained after removing possibly a finite number of small primes from $\mathcal{L'}$) such that for all $\ell \in \mathcal{L}$ the image of $\overline{\rho}_{\pi,\ell}$ is isomorphic to $G_2(\F_{\ell^s})$ for some positive integer $s$.

\qed

\bigskip

We remark that if we allow certain local ramification behavior in our automorphic representations, we can obtain a strong version of Theorem \ref{Prin}.

\begin{theo}\label{vfue}
Let $\pi = \otimes' _v \pi_v$ be an $L$-algebraic regular, self-dual, cuspidal automorphic representation of $\GL_7(\A_F)$ as in Theorem \ref{Prin} and assume that for some finite place $v$, $\pi_v$ is square integrable. Then there exists a set of primes $\mathcal{L}$ of Dirichlet density 1 such that for all  $\ell \in \mathcal{L}$ the image of $\overline{\rho}_{\pi,\ell}$ is isomorphic to $G_2(\F_{\ell^s})$ for some positive integer $s$.
\end{theo}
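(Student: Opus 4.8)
The plan is to upgrade Theorem~\ref{Prin} by replacing the input of \cite{PT15} (which gives residual irreducibility only on a density-positive set) with the stronger statement that $\overline{\rho}_{\pi,\lambda}$ is irreducible for \emph{all but finitely many} $\lambda$, and then re-run the proof of Theorem~\ref{Prin} verbatim. The extra hypothesis that $\pi_p$ is square integrable for some prime $p$ is precisely what makes this possible: by the local-global compatibility at $p$ (the Weil--Deligne representation attached to $\rho_{\pi,\lambda}$ at $p$ is Frobenius-semisimple and corresponds to $\pi_p$ under the local Langlands correspondence), square integrability of $\pi_p$ forces the monodromy operator $N$ to be nonzero; in fact for a Steinberg-type (i.e.\ generalized Steinberg) $\pi_p$ the representation $\rho_{\pi,\lambda}$ restricted to a decomposition group at $p$ is, up to twist, an extension built from the cyclotomic character, so $\rho_{\pi,\lambda}$ is indecomposable in a strong sense. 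The key point is that this non-vanishing of $N$ is a purely automorphic (hence $\lambda$-independent) condition, so it holds for \emph{every} $\lambda$, not just a density-one set.

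The main steps I would carry out are as follows. First, record that $\{\rho_{\pi,\lambda}\}_\lambda$ forms a compatible system in the sense of \cite[\S5.1]{BLGGT14} which is pure and for which $\pi_p$ Steinberg makes the system ``Steinberg at $p$''. Second, invoke the residual irreducibility results for compatible systems that are Steinberg at some prime: under this hypothesis, the Galois representations $\rho_{\pi,\lambda}$ (being irreducible as a consequence of $\pi$ cuspidal, or directly from the Steinberg condition, since a Steinberg-at-$p$ compatible system cannot have a subrepresentation of smaller dimension without contradicting purity/monodromy) have residual reductions $\overline{\rho}_{\pi,\lambda}$ that are irreducible for all but finitely many $\lambda$; this is exactly the improvement over \cite{PT15} that the square-integrability hypothesis buys, and it is the density-one analogue of \cite[Proposition~5.3.2]{BLGGT14}. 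Concretely one shows: for $\ell$ large, any $G_Q$-stable subspace of $\overline{\rho}_{\pi,\lambda}$ would lift (using the monodromy at $p$, which survives reduction mod $\ell$ for large $\ell$ because the eigenvalues of Frobenius on the Weil--Deligne representation have bounded denominators) to contradict the indecomposability coming from the Steinberg local structure. Third, with $\overline{\rho}_{\pi,\lambda}$ irreducible for all but finitely many $\lambda$, the argument of the proof of Theorem~\ref{Prin} applies word for word: cases $i)$--$iii)$ of Proposition~\ref{clas} are excluded by irreducibility; case $iv)$ is excluded for $\ell \ge 2u+2v+1$ by the Fontaine--Laffaille computation in Proposition~\ref{Iner} together with the hypothesis $v \ne 2u$; and cases $v)$, $vi)$ are excluded for $\ell$ large by \cite[Lemma~5.3]{CG13}. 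Removing the finitely many bad primes leaves a set $\mathcal{L}$ of Dirichlet density $1$ with the desired property.

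The step I expect to be the main obstacle is the second one: proving that square integrability of $\pi_p$ forces $\overline{\rho}_{\pi,\lambda}$ to be irreducible for \emph{all} sufficiently large $\ell$ (not merely a density-one set). The delicate point is controlling the reduction mod $\ell$ of the monodromy operator: one needs that for $\ell$ outside a finite bad set, the mod-$\ell$ Weil--Deligne representation still has nontrivial monodromy, which requires a bound on the primes dividing the relevant Frobenius eigenvalue ratios (these ratios are powers of $p$ up to roots of unity in the Steinberg case, so the only bad $\ell$ are those dividing the order of a root of unity occurring, a finite set). Once the mod-$\ell$ monodromy is nonzero, an invariant subspace of $\overline{\rho}_{\pi,\lambda}$ restricted to the decomposition group at $p$ must be compatible with the Steinberg filtration, and a dimension count against the possible $G_2$-subrepresentation dimensions (using that $\sigma$ is $7$-dimensional and the Steinberg block structure) rules out proper subspaces; making this last count clean in the $G_2$-setting, rather than the $\GL_n$-setting of \cite{BLGGT14}, is where some care is needed, though it ultimately reduces to the list in Proposition~\ref{clas} applied to the image of the decomposition group.
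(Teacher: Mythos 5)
There is a genuine gap in your second step, and it is the step carrying all the weight. You claim that the square-integrability of $\pi_p$ forces $\overline{\rho}_{\pi,\lambda}$ to be irreducible for \emph{all but finitely many} $\lambda$, and you propose to prove this by showing that the monodromy operator at $p$ survives reduction mod $\ell$ and then ``lifting'' a putative stable subspace to contradict the indecomposable Steinberg structure. This does not work: $\overline{\rho}_{\pi,\lambda}$ is by definition the semisimplification of a reduction, subrepresentations in characteristic $\ell$ do not lift to characteristic $0$, and nontriviality of the mod-$\ell$ monodromy at $p$ does not prevent the global semisimplified reduction from being reducible --- the decisive input in the $\ell$-adic irreducibility argument of Taylor--Yoshida is purity, which has no mod-$\ell$ analogue. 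Residual irreducibility for all but finitely many $\lambda$ of a $7$-dimensional compatible system is not a consequence of known results, even with a Steinberg place, and no argument of the kind you sketch is available. (Also, your aside that irreducibility of the $\ell$-adic $\rho_{\pi,\lambda}$ already follows from cuspidality of $\pi$ is not correct; it is exactly the square-integrable local component, via \cite[Corollary B]{TY07}, that yields irreducibility for every $\lambda$ above $\ell \neq p$.)

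The good news is that your stronger claim is not needed. The theorem only asserts a set of primes of Dirichlet density $1$, and this is obtained as in the paper: by \cite[Corollary B]{TY07} the representations $\rho_{\pi,\lambda}$ are irreducible for all $\lambda$ above $\ell \neq p$, so the compatible system is irreducible and \cite[Proposition 5.3.2]{BLGGT14} gives a \emph{density one} (not cofinite) set of primes where $\overline{\rho}_{\pi,\lambda}$ is irreducible; one then reruns the proof of Theorem \ref{Prin} (your third step, which is fine) and discards finitely many small primes, which does not affect density $1$. If you replace your second step by this citation chain, your argument matches the paper's proof; as written, it rests on an unproved and essentially out-of-reach intermediate statement.
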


\begin{proof}
Let $\ell$ be a rational prime such that $v \nmid \ell$.
As we are assuming that $\pi_v$ is square integrable, from Corollary B of \cite{TY07}, we have that $\rho_{\pi,\ell}$ is irreducible. 
By Proposition 5.3.2 of \cite{BLGGT14}, there exists a set of primes $\mathcal{L}'$ of Dirichlet density 1, such that for all $\ell \in \mathcal{L}'$, $\overline{\rho}_{\pi,\ell}$ is irreducible. The rest of the proof is exactly the same as the proof of Theorem \ref{Prin}.
In particular, the set of primes $\mathcal{L}$ of Dirichlet density 1 is obtained by removing a finite number of primes from $\mathcal{L'}$ as in the proof of Theorem \ref{Prin}.

\end{proof}

Finally, we remark that Magaard and Savin \cite{MS14} have used (before the appearance of Chenevier's work) this kind local behavior to construct a self-dual cuspidal automorphic representation $\pi$ of $\GL_7(\A_\Q)$ (unramified outside $5$ and such that $\pi_5$ is Steinberg), such that the image of the residual representations $\overline{\rho}_{\pi,\ell}:G_\Q \rightarrow \GL_7(\overline{\F}_\ell)$ associated to $\pi$ are equal to $G_2(\F_\ell)$ for an explicit set of primes of Dirichlet density at least $1/18$.


\section{Some examples and improvements in the case $F=\Q$}\label{sec3}

When $F=\Q$, examples of cuspidal automorphic representations satisfying the assumptions of Theorem \ref{Prin} can be obtained from the computations of Chenevier, Renard \cite{CR15} and Ta\"ibi \cite{Ta17}.

More precisely, let $h,k,t \in \Z$, with $0<h<k<t$, and $O_{o}(t,k,h)$ be the set of $L$-algebraic regular, self-dual, cuspidal automorphic representations of $\GL_7(\A_\Q)$ of level one (i.e. which are everywhere unramified) and weight $\{-t,-k,-h,0,h,k,t \} \in \Z^7_+$. It follows from Theorem 1 of \cite{HC68} that the cardinality of $O_{o}(t,k,h)$ is finite. In the extended version of Table 22 of \cite{Ta17}, Ta\"ibi  compute explicitly the cardinality of $O_{o}(t,k,h)$ for all $0<h<k<t \leq 22$. Then, from Ta\"ibi's computations and Theorem 6.12 of \cite{Ch19}, we have the following result. 

\begin{prop}\label{ejemp}
Let $\mathcal{G}_{2}(h,k)$ be the subset of $L$-algebraic regular, self-dual, cuspidal automorphic representations of $O_{o}(h+k,k,h)$ satisfying the assumptions of Theorem \ref{Prin}. If 
\[
(h,k) \in \{ (5,8), (3,10), (5,9),(4,10),(2,12),(7,8),(4,11),(1,14), (1,16), (1,17) \},
\]
then $\vert \mathcal{G}_{2}(h,k) \vert = 1$.
\end{prop}

On the other hand, let $\varpi$ be an $L$-algebraic regular, cuspidal automorphic representation of $\GL_2(\A_\Q)$, which in fact corresponds to a twist of a cuspidal Hecke eigenform of weight at least 2. By Langlands Functoriality, the sixth symmetric power lifting $\Sym^6(\varpi)$  is an $L$-algebraic regular, self-dual, cuspidal automorphic representation of $\GL_7(\A_\Q)$ \cite[Theorem 6.1]{CT17}. Then, we will say that an $L$-algebraic regular, self-dual, cuspidal automorphic representation $\pi$ of $\GL_7(\A_\Q)$ is a \emph{sixth symmetric power lift} if there is an $L$-algebraic regular, cuspidal automorphic representation $\varpi$ of $\GL_2(\A_\Q)$  such that, for any prime $\ell$,
\[
\rho_{\pi,\ell} \cong \Sym^6 (\sigma_{\varpi,\ell}),
\]
where $\sigma_{\varpi,\ell}: G_\Q \rightarrow \GL_2(\overline{\Q}_\ell)$ is the $\ell$-adic Galois representation associated to $\varpi$. We remark that if $\pi$ is a sixth symmetric power lift, the weight of $\pi$ must be of the form $\{ -3h,-2h,-h, 0, h, 2h, 3h \}$. 
Thus, the automorphic representations considered  in Theorem \ref{Prin} cannot be sixth symmetric power lifts. Thanks to Serre's modularity conjecture, which is a theorem when $F = \Q$ (see \cite{KW09a}, \cite{KW09b} and \cite{Di12}), we have the following result.

\begin{theo}\label{Prin2}
Let $\pi$ be an $L$-algebraic regular, self-dual, cuspidal automorphic representation of $\GL_7(\A_\Q)$ as in Theorem \ref{Chen}. If $\pi$ is not a sixth symmetric power lift, then there exists a positive Dirichlet density set of primes $\mathcal{L}$ such that for all $\ell \in \mathcal{L}$ the image of $\overline{\rho}_{\pi,\lambda}$ is isomorphic to $G_2(\F_{\ell^s})$ for some positive integer $s$.
Moreover, if $\pi_p$ is square integrable for some prime $p$, then $\mathcal{L}$ has Dirichlet density 1.
\end{theo}

\begin{proof}
As in Theorem \ref{Prin}, the proof is given by showing that the image of $\overline{\rho}_{\pi,\ell}$ cannot be contained in any subgroup lying in cases $i)-v)$ of Proposition \ref{clas}. 

Let $\{-(h+k), -k, -h, 0,h,k, h+k \}$ be the weight of $\pi$ and assume that $k=2h$. The case $k \neq 2h$ was dealt in Theorem \ref{Prin}.
Note that cases $i)-iii)$ can be dealt in exactly the same way as in the proof of Theorem \ref{Prin}. Then, there is a positive Dirichlet density set of primes $\mathcal{L'}$ such that, for all $\ell \in \mathcal{L}'$, $\overline{\rho}_{\pi,\ell}$ is irreducible. 

Now, let $\ell \in \mathcal{L}'$ and assume that the image of $\overline{\rho}_{\pi,\ell}$ is contained in a maximal subgroup lying in case $iv)$ of Proposition \ref{clas}. Then
\[
\overline{\rho}_{\pi, \ell} \simeq \Sym^6(\overline{\sigma}_\ell),
\]
where $\overline{\sigma}_\ell: G_\Q \rightarrow \GL_2(\overline{\F}_\ell)$ is a two-dimensional irreducible Galois representation. From \cite[Proposition 1]{Tay12}(see also \cite{Ta16} and \cite{CH16}), we have that, if  $c \in G_F$ is a complex conjugation, then $ \Tr (\rho_{\pi, \ell}(c)) = \pm 1$. Thus, by the structure of $\Sym^6$, we have that $\overline{\sigma}_\lambda$ is odd. 
Moreover, from the explicit description of $\Sym^6$, the weight of $\pi$ and Proposition \ref{Iner}, we have that, if  $\ell \notin S_\pi$ and $6h \leq \ell$, $\overline{\sigma}_\ell$ has an inertial weight of the form $\{ -\frac{h}{2}, \frac{h}{2} \}$. 
Hence, by Serre's modularity conjecture, there is a cuspidal Hecke eigenform $f$, of weight $h+1\geq 2$ and level bounded independently of $\ell$, such that
\begin{equation}\label{simi}
\rho_{\pi, \ell} \equiv \Sym^6(\sigma_{\varpi_{f},\lambda}) \mod \ell,
\end{equation}
where $\varpi_f$ is the $L$-algebraic regular, cuspidal automorphic representation of $\GL_2(\A_\Q)$ corresponding to $f$. We remark that the set of such cuspidal Hecke eigenform (with a fixed weight and bounded level) is finite.
Then, if the congruence (\ref{simi}) is satisfied for infinitely many primes $\ell$, by Dirichlet principle, we have that there exist a fixed cuspidal Hecke eigenform $f$ such that
\[
\rho_{\pi, \lambda} \equiv \Sym^6(\sigma_{\varpi_f,\lambda}) \mod \ell,
\]
for infinitely many primes $\ell$. Therefore, by Chevotarev's density theorem, it follows that 
\[
\rho_{\pi, \lambda} \simeq \Sym^6(\sigma_{\varpi_f,\lambda}),
\]
for all primes $\ell$. Thus, $\pi$ is a sixth symmetric power lift, contradicting our assumption on $\pi$.

Finally, case $v)$ of Proposition \ref{clas} can be dealt as in the proof of Theorem \ref{Prin} and the set of primes $\mathcal{L} \subset \mathcal{L}'$ of positive Dirichlet density can be obtained by removing at most a finite number of small primes from $\mathcal{L'}$. Moreover, if $\pi_p$ is square integrable for some prime $p$, we can proceed exactly as in Theorem \ref{vfue}.

\end{proof}

\begin{rema}\label{ejem2}
From the computations of Chenevier, Renard and Ta\"ibi, we can obtain an example of cuspidal automorphic representation satisfying Theorem \ref{Prin2} but not Theorem \ref{Prin}. More precisely, from Theorem 6.12 of \cite{Ch19} we have that there exists an $L$-algebraic cuspidal, self-dual, cuspidal automorphic representation of $\GL_7(\A_\Q)$ satisfying Theorem \ref{Chen}, but of weight $(4,8)$, then it does not satisfy Theorem \ref{Prin}. 
However, this cuspidal automorphic representation satisfies Theorem \ref{Prin2} because it is not a sixth symmetric power lift. If it were a sixth symmetric power lift, by the discussion in the proof of Theorem \ref{Prin2}, it should come from a cuspidal Hecke eigenform $f$ of weight $5$ and level 1, which does not exist. 
\end{rema}

\paragraph*{Acknowledgments:} I would like to thank Ariel Weiss for very useful comments and suggestions on a previous version of this paper. I also thank Luis Lomelí for useful discussions about the exceptional group $G_2$.



\textsc{.\\
Instituto de Matem\'aticas \\ Pontificia Universidad Cat\' olica de Valpara\'iso \\ Blanco Viel 596, Cerro Bar\'on \\ Valpara\'iso, Chile} \\
E-mail: \texttt{adrian.zenteno@pucv.cl}


\begin{thebibliography}{80}

\addcontentsline{toc}{section}{Referencias}




\bibitem[BLGGT14]{BLGGT14} T. Barnet-Lamb, T. Gee, D. Geraghty and R. Taylor, \textit{Potential automorphy and change of weight}. Ann. of Math. {\bf 179} (2014) 501-609.

\bibitem[Bar20]{Bar20} R. Bartlett, \textit{Inertial and Hodge-Tate weights of crystalline representations.} Math. Ann. {\bf 376} (2020), No. 1-2, 645-681.

\bibitem[BG14]{BG14} K. Buzzard and T. Gee, \textit{The conjectural connections between automorphic representations and Galois representations},
Automorphic forms and Galois representations. Vol. 1, London Math. Soc. Lecture Note Ser., vol. 414, Cambridge Univ. Press, Cambridge, 2014, pp. 135-187. 
 
\bibitem[CG13]{CG13} F. Calegari and T. Gee, \textit{Irreducibility of automorphic Galois representations of GL(n), n at most 5}. Ann. Inst. Fourier (Grenoble) 63 (2013), no. 5, 1881-1912.

\bibitem[CLH16]{CH16} A. Caraiani and B. V. Le Hung, \textit{On the image of complex conjugation in certain Galois representations}. Compositio Math. {\bf 152} (2016), no. 7, 1476-1488. 

\bibitem[Ch19]{Ch19} G. Chenevier, \textit{Subgroups of $\Spin(7)$ or $\SO(7)$ with each element conjugate to some element of $G_2$, and applications to automorphic forms}, Documenta Math. {\bf 24} (2019), 95-161. 


\bibitem[CR15]{CR15} G. Chenevier and D. Renard, \textit{Level one algebraic cuspforms of classical groups of small rank}. Memoirs of the A.M.S. 1121, vol. 237 (2015).


\bibitem[CT17]{CT17} L. Clozel and J. Thorne, \textit{Level-raising and symmetric power functoriality, III.} Duke Math. J. {\bf 166} (2017), No. 2, 325-402. 

\bibitem[Di02]{Di02} L. Dieulefait, \textit{On the images of the Galois representations attached to genus 2 Siegel module forms.} J. Reine Angew. Math. {\bf 553} (2002), 183-200.

\bibitem[Di12]{Di12} L. Dieulefait, \textit{Remarks on Serre's modularity conjecture}. Manuscripta Math. {\bf 139} (2012), no. 1-2, 71-89.

\bibitem[DZ20]{DZ20} L. Dieulefait and A. Zenteno, \textit{On the images of the Galois representations attached to generic automorphic representations of $\GSp(4)$}. Ann. Scuola Norm. Sup. Pisa Cl. Sci. (5), {\bf 20} (2020), no. 2, 635-655.

\bibitem[Dim05]{Dim05} M. Dimitrov.  \textit{Galois representations modulo p and cohomology of Hilbert modular varieties}. Ann. Sci. \'Ecole Norm. Sup. (4), {\bf 38} (2005) no. 4, 505-551.

\bibitem[FL82]{FL82} J.-M. Fontaine and G. Laffaille, \textit{Construction de représentations p-adiques}. Ann. Sci. \'Ecole Norm. Sup. (4), {\bf 15} (192), 547-608.

\bibitem[HC68]{HC68} Harish-Chandra, \textit{Automorphic forms on semisimple Lie groups}. Notes by J. G. M. Mars. Lecture Notes in Mathematics, No. 62, Springer-Verlag, Berlin-New York, 1968.

\bibitem[KW09a]{KW09a} C. Khare and J. P. Wintenberger, \textit{Serre's modularity conjecture. I.} Invent. Math. {\bf 178} (2009), no. 3, 485-504.

\bibitem[KW09b]{KW09b} C. Khare and J. P. Wintenberger, \textit{Serre's modularity conjecture. II.} Invent. Math. {\bf 178} (2009), no. 3, 505-586.

\bibitem[KLS10]{KLS10} C. Khare, M. Larsen and G. Savin, \textit{Functoriality and the inverse Galois problem II: Groups of type $B_n$ and $G_2$}, Ann. Fac. Sci. Toulouse Math. (6) {\bf 19} (2010), 37-70.

\bibitem[Kle88]{Kle88} P. B. Kleidman, \textit{The maximal subgroups of the Chevalley groups $G_2(q)$ with $q$ odd, the Ree groups $^2{G_2(q)}$, and their automorphism groups}. J. Algebra, {\bf 117} (1988)  30-71.

\bibitem[MS]{MS14} K. Magaard and G. Savin, \textit{Computing finite Galois groups arising from automorphic forms}. ArXiv:1406.3773v1.

\bibitem[Mo81]{Mo81} F. Momose, \textit{On the l-adic representations attached to modular forms.} J. Fac. Sci. Univ. Tokyo Sect. IA {\bf 28} (1981), 89-109.

\bibitem[PT15]{PT15} S. Patrikis and R. Taylor, \textit{Automorphy and irreducibility of some l-adic representations}. Compositio Math. {\bf 151} (2015) 207-229.

\bibitem[Ri85]{Ri85} K. Ribet, \textit{On l-adic representations attached to modular forms. II.} Glasgow Math. J. {\bf 27} (1985), 185-194.



\bibitem[Ta\"i16]{Ta16} O. Ta\"ibi, \textit{Eigenvarieties for classical groups and complex conjugations in Galois representations}. Math. Res. Lett. {\bf 23} (2016), No. 4, 1167-1220. 

\bibitem[Ta\"i17]{Ta17} O. Ta\"ibi, \textit{Dimension spaces of level one automorphic forms for split classical groups using the trace formula}. Ann. Sci.
E.N.S. {\bf 50} (2017), 269-344.  The extended tables are available at: \texttt{http://otaibi.perso.math.cnrs.fr/dimtrace/}

\bibitem[Tay12]{Tay12} R. Taylor, \textit{The image of complex conjugation in l-adic representations associated to automorphic forms}. Algebra Number Theory {\bf 6} (2012), no. 3, 405–435.

\bibitem[TY07]{TY07} R. Taylor and T.Yoshida, \textit{Compatibility of local and global Langlands correspondences}. J. Amer. Math. Soc. {\bf 20} (2007), no. 2, 467-493.

\bibitem[Wei19]{Wei2} A. Weiss, \textit{On Galois representations associated to low weight Hilbert–Siegel modular forms} (PhD Thesis). University of Sheffield, UK. (2019)

\bibitem[Wei]{Wei} A. Weiss, \textit{On the image of Galois representations attached to low weight Siegel modular forms.} arXiv:1802.08537v3.

\bibitem[Wil09]{Wil09} R. A. Wilson, \textit{The finite simple groups.} Graduate Texts in Mathematics 251, London: Springer (2009).
 
\end{thebibliography}
\end{document}